\DeclareFontFamily{OMX}{mlmex}{}
\DeclareFontShape{OMX}{mlmex}{m}{n}{%
   <->mlmex10%
   }{}%
\theoremstyle{plain}
\newtheorem{theorem}{Theorem}
\newtheorem{proposition}[theorem]{Proposition}
\newtheorem{lemma}[theorem]{Lemma}
\theoremstyle{definition}
\newtheorem{remark}[theorem]{Remark}
\DeclareMathOperator{\ZZ}{\mathbb{Z}}
\DeclareMathOperator{\RR}{\mathbb{R}}
\title[Moments for Kempner series]{Moments in the exact summation of the\\%
curious series of Kempner type}
\author[J.-F. Burnol]{Jean-François Burnol}
\address{Université de Lille, Faculté des Sciences et technologies,
  Département de mathématiques, Cité Scientifique, F-59655 Villeneuve d'Ascq
  cedex, France}
\email{jean-francois.burnol@univ-lille.fr}
\date{October 2024. To appear (with better introduction, conclusion and style) in American
  Mathematical Monthly in 2025. Compared to v1, the definition of set curly A was
  slightly changed to handle digit zero allowed or not in a
  more unified way (the measure is exactly same as in v1).  References to
  further works by the author and others got added.}
\subjclass[2020]{Primary 11Y60; Secondary 11A63; 44A60; 40-04}
\keywords{Kempner series, numerical evaluation of mathematical constants}
\begin{document}

\singlespacing
\begin{abstract}
  We obtain for the Kempner series (i.e.\@ harmonic series where certain
  digits are excluded from all denominators, for example the digit 9 in base
  10) new representations as geometrically convergent series.  The
  coefficients for these representations involve the power sums on the allowed
  digits and the moments of an explicitly described measure on the unit
  interval.  These moments can be computed numerically by recurrence.  We
  establish a priori lower and upper bounds for them.  This allows converting
  the theoretical formulas into an efficient numerical algorithm.
\end{abstract}

\maketitle

\onehalfspacing
\section{Introduction.}

Let $b>1$ an integer.  The $b$-ary representation of integers uses the
elements of the set $\{0,\dots, b-1\}$ as digits.  A positive integer has a
unique minimal representation as a string of such digits, the leftmost one
(also named the ``leading'', or ``most significant'' digit) being non-zero.

Kempner \cite{kempner} and Irwin \cite{irwin}
(both working exclusively with $b=10$) considered infinite subsums of the
harmonic series with constraints on the digits in the denominators.  Kempner
allowed only those terms having no occurrence of a given digit $d$ and proved
the convergence.  Irwin proved the
convergence under conditions of the type ``exactly three occurrences of $5$ or
at most one occurrence of $2$''.

Kempner \cite{kempner} (and also Irwin \cite{irwin}) called the convergence
``curious''.
The Kempner result is Theorem 144 in the
widely-read book  \cite{hardywright} by Hardy and Wright.

Baillie \cite{baillie1979,baillie2008} provided numerical algorithms to
evaluate Kempner and Irwin series.  Schmelzer and Baillie
\cite{schmelzerbaillie} extended the Kempner ``no occurrence'' constraint from
a single digit to consecutive digits.  They obtain (see also
\cite{baillie2008}) a numerical algorithm (with implementation) to compute
such infinite sums.

Recently Allouche and Morin \cite{allouchemorin2023} for
$b=2$, and Allouche, Hu and Morin
\cite{allouchehumorin2024} for general $b>1$ obtained a limit theorem for the
harmonic series where denominators have exactly $k$ occurrences of a given
fixed string of $p$ digits: they tend to $b^p\log(b)$ as $k\to\infty$.  The
$p=1$ case had been known since the work of Farhi \cite{farhi}.  Many
generalizations of Kempner-like conditions have been studied \cite{craven1965,
  alexander1971, fischer, behforooz1995, wadhwa1975,wadhwa1978,
  kohlerspilker2009, lubeckponomarenko2018, gordon2019, mukherjeesarkar2021,
  nathanson2021integers, nathanson2021jnt, nathanson2022ramanujan}. We comment
more in the concluding section on Fischer's work \cite{fischer} as it bears
some close relation to the approach of the present paper.

Lubeck and Ponomarenko \cite{lubeckponomarenko2018} (see also
\cite{gordon2019} by Gordon) introduce (in the
second part of their paper) a probabilistic flavor by considering an integer
$n$ having $q$ digits in base $10$ as the result of $q$ random choices, with
the constraint that the first digit must be positive.  For a fixed parameter
$\lambda$ they consider the series of reciprocals of the positive integers
such that the proportion of $9$'s among their decimal digits is at most equal
to $\lambda$.  For $\lambda\geq\frac1{10}$ probabilistic intuition suggests
that the condition will keep among the denominators ``typical'' numbers (or,
for $\lambda=\frac1{10}$, ``half'' of the ``typical'' numbers), hence that the
series diverges.  For $\lambda<\frac1{10}$ probabilistic intuition suggests
that very few denominators, among those sharing the same length $N$, $N\gg1$,
are kept and that the series converges.  Using tools such as the
Chernoff bound
(see their paper for statement and reference) on the lower tail of a binomial
or Poisson distribution, and Riemann-Stieltjes integration, they validate both
statements.  Measures and integration will also provide the framework of this
paper.

One challenge of the Kempner series is
that evaluating them numerically, even at the computer age, is a non-trivial task.
Baillie \cite{baillie1979} obtained, to twenty decimal places each, the ten sums
where the excluded decimal digit ranges from $0$ through $9$.  Let us briefly
describe, with $d$ being the excluded decimal digit, initial steps leading to
Baillie's algorithm.  Let $S_l=\sum'_{l(n)=l} n^{-1}$ be the $l$-th
``block'' gathering contributions from all allowed denominators having
$l(n)=l$ digits.  The primed summation \smash{$\sum'$} is notation for the
restriction to only those terms obeying the ``no $d$'' condition.  Using the
upper and lower bounds provided by the expansion
$1/(10x+a) = 1/(10x) - a/(100x^2)+\dots$, and defining $S_{l,2}$ as the
variant of $S_{l}$ where all terms are squared, we obtain
$0.9\,S_l - \frac{45-d}{100}\,S_{l,2}< S_{l+1}<0.9\,S_l$, where $45-d$ is the
sum of the $a\neq d$. Using more terms of the expansion of
$(10x+a)^{-1}$, where $a\neq d$ and $x$ already satisfies the ``no $d$''
condition, one is led to define generally $S_{l,k}=\sum'_{l(n)=l} n^{-k}$ and
to express $S_{l+1,k}$ as an infinite combination of the $S_{l,k'}$,
$k'=k,k+1,\dots$.  After computing a large enough number of $S_{l,k}$'s for
small $l$'s and many $k$'s one can propagate numerically to higher $l$'s.
This idea requires careful formalization and book-keeping, and considerations
going beyond the brief summary we have given.  As our present work does not
use this directly, we refer the reader to \cite{baillie1979} and
\cite{baillie2008}, as well as \cite{schmelzerbaillie} for the actual details.

We obtain a theoretically exact representation of Kempner sums as alternating
series with terms associated to certain recurrences (Theorem
\ref{thm:main}). The a priori estimates given in Theorem \ref{thm:main}, and
the alternating nature of the series it considers, allow to convert its
formulas directly into an efficient numerical algorithm: two
\textsc{Maple}\texttrademark{} scripts are available as ``ancillary files'' at
\url{https://arxiv.org/abs/2402.08525} (they handle $b=10$).  \textsc{Python}
and \textsc{SageMath} scripts (for any $b>1$) are available at
\url{https://arxiv.org/abs/2402.09083}: they handle the case of harmonic
series with denominators constrained to have a given number $k$ of occurrences
of a single $b$-ary digit $d$.  For $k=0$, they implement the formulas of Theorem
\ref{thm:main}.

\section{Notations and main result.}

Let $A$ be a subset of the set $\{d\in \ZZ, 0\leq d<b\}$ of $b$-ary digits.  Its
elements will be called the ``admissible'' (or, sometimes, the ``allowed'') digits.  For reasons explained next, we
exclude the cases of $A$ either containing all digits, or being reduced to
$\{0\}$, or being empty.  Let
\begin{equation}
  K(A) = \sum_{n>0}\nolimits' \frac1n\quad,
\end{equation}
where the prime symbol $'$ restricts the summation to those denominators $n$
having all their digits from the set $A$.
With $A$ being all of $\{0,\dots,b-1\}$, this definition would give the full
harmonic series, which diverges, and we exclude this case for that reason.  For
$A$ empty or $\{0\}$, the series would be empty, and its numerical value
$K(A)$ would be zero.  Although a number of constructions do work, by
excluding also these two latter cases we avoid repeated special provisions in the
statements and proofs thus providing a smoother exposition.

We call a positive integer admissible if its $b$-ary representation contains
only admissible digits.  The integer $0$ is always considered admissible (its
minimal $b$-ary representation is the empty word hence verifies the condition
of having only digits from $A$).  So $0$ as integer is always admissible but
$0$ as a digit may not be.  This dual nature of $0$ will hopefully not cause
confusion.  We let $\mathcal{A}$ be the set of admissible non-negative
integers.  In the sequel, the primed summation symbol $\sum'$ means that only
integers in $\mathcal{A}$ are included.

Let $N=\# A$ and $N_1 = \#(A \setminus\{0\})$.  The contribution to $K(A)$
from integers with $b^{l-1}\leq n< b^l$ is bounded above by $N_1\cdot
N^{l-1}/b^{(l-1)}$, hence the series converges as $N<b$. We next have to
evaluate it numerically\dots

Grouping integers according to their number of $b$-ary digits is a fundamental
tool ever since Kempner's original work and Baillie's algorithms
\cite{baillie1979,baillie2008}.  The notation $l(n)$, already used earlier
with decimal representations, will denote for $n>0$ the number of its $b$-ary
digits (the integer $b>1$ being fixed throughout this text).  Thus
$l(n)$ is the smallest non-negative exponent such that $n<b^l$.  Applying this
definition also to $n=0$ leads us to decide that $l(0)=0$.  This aligns
with $0$ having as minimal representation the empty word.

To state the main theorem we define quantities $\beta_{l,m}$ for $l\geq1$,
$m\geq0$:
\begin{equation}\label{eq:beta}
  \beta_{l,m} =\sum_{b^{l-1}\leq n < b^{l}}\nolimits'  n^{-m}
              = \sum_{l(n)=l}\nolimits'  n^{-m} \;,
\end{equation}
and
\begin{equation}\label{eq:gamma}
  \gamma_j =
  \begin{cases}
    \sum_{a\in A} a^j = \sum_{l(n)=1}\nolimits' n^j&(j\geq1)\;,
\\
    N &(j=0)\;.
  \end{cases}
\end{equation}
Note that the right-most expression given above for $j\geq1$ would, if applied
to $j=0$, compute $N_1 = \#(A \setminus\{0\})$ which is either $N=\#A$ or $N-1$.
\begin{theorem}\label{thm:main}
  Let $\ell\geq1$ be arbitrarily chosen.  Let $K = \sum'_{n>0}
  n^{-1}$ where the positive integers $n$ are those having all their $b$-ary
  digits from a set $A$
  of cardinality $N$, $0<N<b$, $A\neq\{0\}$. 

\noindent\textup{(a)} The ``Kempner sum'' $K$ can be evaluated using:
  \begin{equation}\label{eq:main}
    K = \sum\nolimits'_{0<n<b^{\ell-1}} \frac1n +
       \frac{b}{b-N}\sum\nolimits'_{b^{\ell-1}\leq n<b^\ell} \frac1n  +
       \sum_{m=1}^\infty (-1)^m u_m \beta_{\ell,m+1}\;.
  \end{equation}
  The quantities $\beta_{\ell,m}$'s are those
  defined in equation \eqref{eq:beta}.
  The $u_m$'s, which are independent of $\ell$, are uniquely determined by the conditions 
  $u_0=b/(b-N)$ and
  \begin{equation}\label{eq:recur}
    (b^{m+1} - N) u_m = \sum_{j=1}^{m} \binom{m}{j} \gamma_j u_{m-j},\quad (m\geq1).
  \end{equation}
  The $\gamma_j$'s used in this recursive definition are those from \eqref{eq:gamma}.

\noindent\textup{(b)}
  The quantities $\lambda_{m+1}$
  implicitly defined by
  \begin{equation}
    u_m = \frac{\lambda_{m+1}}{m+1}\biggl(\frac{\max A}{b-1}\biggr)^{\!m}\frac{b}{b-N}\quad,
  \end{equation}
  satisfy $\lambda_1=1$ and $b^{-1} < \lambda_{m+1} < 1$ for $m+1\geq2$.  The
  ratios $\lambda_{m+1}/(m+1)$ are positive and decreasing towards
  zero.

\noindent\textup{(c)}
  The convergence of \eqref{eq:main} is bounded geometrically except when simultaneously
  $\ell=1$,
  $1\in A$, and $b-1 \in A$.
\end{theorem}
The proof will be the object of the rest of this text.  For now, we make a few
simple comments.

For the original ``no $9$'' Kempner series, using Theorem \ref{thm:main} with $\ell=1$,
the first upper bound is $10(\frac11 + \dots + \frac18)\approx 27.18$. With
$\ell=2$ it is $\frac11 + \dots + \frac18 + 10 (\frac1{10} + \dots +
\frac1{18} + \frac1{20} + \dots + \frac1{88}) \approx 23.26$ which is already
not so far from $K = 22.9206766\dots$.  The $\ell=3$ initial approximant turns out
to be about $22.95$.

With $\ell=1$ and still for the ``no $9$'' $K$, the $\beta_{1,m+1}$'s are
$O(1)$ (they are the sum of $1^{-m-1}$ to $8^{-m-1}$) and $u_m \asymp
(8/9)^m/m$, so the series from equation \eqref{eq:main} converges at a rate similar to the convergence of the Taylor series for
$\log(1 + \frac89)$, which is not satisfactory numerically.

For $\ell=2$ on the other hand, $\beta_{2,m+1}=O(10^{-m})$ and the $u_m$'s are
the same
as for $\ell=1$, so here a geometric convergence with ratio $8/90$ is
guaranteed, meaning (ignoring implied constants) roughly one more decimal
digit of the final sum at each added term.  For $\ell=3$ we get about two
additional decimal digits of the Kempner sum at each added term.

\section{The Kempner sums as integrals.}

A non-negative measure $\mu$ on the real line $\RR$ with $\mu(\RR)=c>0$ is said to be a
Dirac point mass if there exists some (necessarily unique) $x$ with
$\mu(\{x\}) = c$.  We then use the notation $\mu = c\delta_x$, and call
$\delta_x$ the ``Dirac distribution at $x$'' (with unit mass).

Given an enumerated subset $\{x_1,x_2,\dots\}$ of the real line, and a series with
non-negative terms $\sum_{n\geq1} c_n$, (possibly diverging), one can define
the set-function $\mu:\mathcal{P}(\RR)\to \RR_{\geq0}\cup\{+\infty\}$ which
assigns to any subset $G$ of the real line the quantity $\mu(G) =
\sum_{n=1}^\infty c_n\mathbf{1}_{G}(x_n)\in[0,\infty]$.  This set-function is
countably additive and we call it a (non-negative, discrete, possibly
infinite) measure.  We define integrable functions as being those functions
with $\sum_{n=1}^\infty c_n |f(x_n)|<\infty$.  Then $\int_{\RR} f(x)\,d\mu(x)$
is defined as expected and obeys the usual theorems for integration (linearity, monotony,
Lebesgue limit theorems) and also is invariant under any rearrangement of the
indexing of the set $\{x_i\}$.  We write $\mu = \sum_i c_i\delta_{x_i}$,
where the countable index set to which $i$ belongs does not have to be
$\ZZ_{>0}$.

When such an infinite discrete weighted sum of Dirac masses is locally finite
(i.e.\@ assigns a finite mass to any finite length interval) it can be used to
define a linear functional on compactly supported continuous functions and may
be seen as defining also a
Schwartz distribution.  This will be the case for the measure we define next
in association to a base $b$ and a (proper) set $A$ of admissible digits.  But
we do not need any real measure theory here.  Our integrated functions do not
need to verify any continuity conditions.  We employ the language as an
alternative to and convenient notation for direct re-arrangements of various
summable series.

Let the measure
$\mu_{b,A}$ on $[0,+\infty)$ be defined this way:
\begin{equation}
  \mu_{b,A} = \begin{cases}
    \sum_{l\geq0} b^{-l}\sum_{n\in\mathcal{A}} \delta_{n/b^{l}} & \text{if }0\in A\;,\\
    \sum_{l\geq0} b^{-l}\sum_{n\in\mathcal{A}, l(n)\geq l} \delta_{n/b^{l}} &
    \text{if }0\notin A\;.
  \end{cases}
\end{equation}
As $b$ and $A$ will be fixed throughout the paper, we drop the indices and use
simply $\mu$ as notation. Also from here on we shall use $K$ as an abbreviation
for $K(A)$.

In the case $0\notin A$, the condition $l(n)\geq l$ means that either $n=0$
and $l=0$ (recall that $0\in\mathcal{A}$) or $n/b^l\geq b^{-1}$. After
separating the contributions originating from $l=0$ we obtain the following
equivalent representation which we will use preferentially:
\begin{equation}\label{eq:mu0notinA}
  \text{For }0\notin A:\mu = \sum_{n\in\mathcal{A}}\delta_n + 
        \sum_{l\geq1} b^{-l}\sum_{n\in\mathcal{A}, n\geq b^{l-1}}
        \delta_{n/b^{l}}\;.
\end{equation} 

Even before checking that $\mu$ assigns a finite mass to $[0,1)$ and more
generally to all bounded intervals, let us relate immediately $K$ to an integration:
\begin{equation}\label{eq:1}
  \int_{[b^{-1},1)} \frac{d\mu(x)}x =  
   \sum_{l\geq0} \frac1{b^l} \sum_{b^{l-1}\leq n< b^l}\nolimits' \frac{b^l}{n}
                   = \sum_{n>0}\nolimits' \frac1n = K\;.
\end{equation}
Any good approximation of $x^{-1}$ in the uniform norm by a polynomial on the
interval $[b^{-1},1]$ will thus lead to good approximations of $K$, assuming
we know the $v_m = \int_{[b^{-1},1)} x^m\,d\mu(x)$.  In this paper we shall
rather work with the moments of $\mu$ on $[0,1)$:
\begin{equation}\label{eq:moment}
  u_m = \int_{[0,1)} x^m\,d\mu(x)\qquad (m\geq0).
\end{equation}
We first need to check that $u_0$ and, hence, the $u_m$'s are all finite.
Suppose $0\in A$.  Then
\begin{equation*}
  \mu([0,1)) =  \sum_{l\geq0} b^{-l} \#\{n\in \mathcal{A}, n < b^{l}\}
= \sum_{l\geq0} b^{-l} N^{l} = \frac{b}{b-N}\quad.
\end{equation*}
If $0\notin A$, the computation is different but the result identical:
\begin{equation*}
    \mu([0,1)) = 1 + \sum_{l\geq1} b^{-l} \#\{n\in \mathcal{A},
    b^{l-1}\leq n < b^{l}\} = 1 + \sum_{l\geq1} b^{-l}N^l =\frac{b}{b-N}\quad.
\end{equation*}

Here is a more detailed description of the points in the support of $\mu$
and their associated masses:
\begin{itemize}
\item If $0\in A$, a positive rational number $x$ is in the support of $\mu$
  if and only if its irreducible representation as a fraction is $n/b^l$, with
  $n$ an admissible integer and $l\geq0$.  The weight in $\mu$ of $\delta_x$ is
  $b^{-l}+b^{-l-1}+ \dots = b^{-l}(1-1/b)^{-1}$.

  There are points arbitrarily close to $0$ in the support of
  $\mu$.

  The total weight of the Dirac at the origin is $1+ b^{-1}+b^{-2}+\dots =
  b/(b-1)$, it is the same weight as applies to the $\delta_n$'s with $n$
  a positive admissible integer.

\item If $0\notin A$, a positive rational number $x$ is in the support of $\mu$
  if and only if its irreducible representation as a fraction is $n/b^l$, with
  $l\geq0$ and $n$ an admissible positive integer of length at least $l$.  The weight
  of $\delta_x$ for such $x=n/b^l$, $b\nmid n$, $l\geq0$ is $b^{-l}$.

  No points in the open interval $(0,b^{-1})$ are in the support of
  $\mu$.  The Dirac at the origin appears in $\mu$ with weight $1$ as do the
  $\delta_n$'s for $n$ a positive admissible integer.
\end{itemize}
In both cases the only positive integers in the support are the
admissible ones.
If $x$ is in the support and is not an integer, then $bx$ is also
in the support and the weight of $\delta_{bx}$ in $\mu$ is $b$ times the one of
$\delta_x$.
\begin{proposition}
  If $n$ is a positive integer, the restriction of $\mu$ to $[n,n+1)$ is zero
  if $n$ is not admissible, and is the translate by $n$ of the restriction of
  $\mu$ to $[0,1)$ if $n$ is admissible.
\end{proposition}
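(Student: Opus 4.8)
The plan is to reduce the statement to a singleton-by-singleton comparison of atomic weights, the only genuine input being the elementary behaviour of base-$b$ digits under $r\mapsto nb^l+r$. Since $\mu$ is a countable sum of Dirac masses with finite weights (each singleton receives mass at most $b/(b-1)$), the measure $\mu$ restricted to $[n,n+1)$ and the translate by $n$ of $\mu$ restricted to $[0,1)$ are both purely atomic, so it is enough to check that they assign the same mass to each point. Writing an arbitrary $x\in[n,n+1)$ as $x=n+y$ with $y\in[0,1)$, what must be shown is that $\mu(\{n+y\})=\mu(\{y\})$ when $n$ is admissible and $\mu(\{n+y\})=0$ when $n$ is not admissible.

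First I would record the digit lemma: for $n\geq1$ and $0\leq r<b^l$, the base-$b$ representation of $m:=nb^l+r$ is the concatenation of that of $n$ with the length-$l$ zero-padded representation of $r$. Two consequences get used: $l(m)\geq l+1$, because $m\geq b^l$; and $m\in\mathcal A$ if and only if $n\in\mathcal A$ and each of those $l$ padded digits of $r$ lies in $A$. Then, for fixed $x=n+y$, I would set up the bijection between pairs $(m,l)$ with $l\geq0$, $m$ a positive integer and $m/b^l=x$, and pairs $(r,l)$ with $0\leq r<b^l$ and $r/b^l=y$, given by $r=m-nb^l$. The weight $b^{-l}$ contributed to $\mu$ is the same on the two sides, so the whole matter comes down to matching the admissibility constraints that decide which pairs actually contribute.

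Here the two cases of the definition of $\mu$ enter. If $0\in A$ the zero-padding is harmless, so by the digit lemma a pair $(m,l)$ contributes iff $n\in\mathcal A$ and $r\in\mathcal A$ (recall $0\in\mathcal A$ now), which is exactly the condition $r\in\mathcal A$ governing $(r,l)$ on the $y$-side; summing $b^{-l}$ over the matching index set gives $\mu(\{n+y\})=\mu(\{y\})$ for admissible $n$ and an empty index set, hence $0$, otherwise. If $0\notin A$ one additionally needs $l(m)\geq l$, but this is automatic since $n\geq1$ forces $l(m)\geq l+1$; and the padded-digit condition now means the length-$l$ representation of $r$ has all its digits in $A$, which (since $0\notin A$) forces $r$ to have no leading zero, i.e. $r\in\mathcal A$ with $l(r)=l$ — equivalently $r\in\mathcal A$ with $l(r)\geq l$, the reverse inequality being free from $r<b^l$. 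That is precisely the condition attached to $(r,l)$ in the sum defining $\mu$. The one remaining subtlety is the endpoint $y=0$, that is $r=0$, $l=0$, $x=n$: the sum contributes nothing there since $0\notin\mathcal A$, whereas the extra $\delta_0$ gives $\mu(\{0\})=1$; correspondingly, on the $x$-side the unique admissible representation of $x=n$ is $n/b^0$ with weight $b^0=1$, present exactly when $n\in\mathcal A$. Collecting the cases yields the proposition.

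The main obstacle, such as it is, is exactly this bookkeeping around leading zeros and the isolated atom at an integer in the case $0\notin A$; the digit lemma itself and the matching of the weights $b^{-l}$ are routine.
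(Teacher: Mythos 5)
Your proof is correct and takes essentially the same route as the paper: an atom-by-atom comparison of weights based on writing $m = nb^l + r$ and the observation that admissibility of $m$ is equivalent to admissibility of $n$ together with the trailing $l$-digit block having all its digits in $A$. The only difference is bookkeeping: you sum over all representations $m/b^l$ of a point rather than the irreducible one used in the paper's support description, which lets you handle the cases $0\in A$ and $0\notin A$ explicitly (the paper details only $0\notin A$ and leaves the other to the reader).
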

\begin{proof}
  Let us first suppose that $0\notin A$.  We have already mentioned that a
  positive integer $n$ is in the support if and only if it is admissible, and
  the weight in $\mu$ of $\delta_n$ is then $1$ (it would be $b/(b-1)$ for
  $0\in A$).  So let us examine when $x\in
  (n,n+1)$ is in the support.  Its irreducible representation has to be
   (as $x$ isn't an integer)
  $x=m/b^l$ with $l\geq1$ and $m$ admissible
  having at least $l$ digits. But as $x>1$, $m$ must automatically have $q>l$
  digits.  Then $n$, which is the integral part of $x$, is the integer
  obtained from the first $q-l$ digits of $m$. It is thus admissible.  Let us
  now write $m=b^ln+p$ with $0\leq p<b^l$. Then $p$ is admissible and positive
  as it is
  obtained from the last $l\geq1$ digits of $m$.  In particular it is not divisible
  by $b$ (in the $0\in A$ case we would rather say $b\nmid m\implies b\nmid
  p$) and has to be at least $b^{l-1}$ (no analog in the $0\in A$ case).  So
  $y=x-n=p/b^l \in (0,1)$ is in the support and has the very same weight
  $b^{-l}$ as $x$. Conversely starting from such a $y\in(0,1)$ in the support
  of $\mu$, it is $p/b^l$ for some admissible $p\geq b^{l-1}$ so $m=b^ln+p$ is
  admissible and $x=y+n=m/b^l$ is in the support with weight $b^{-l}$.
  
  The case with $0\in A$ is similar and simpler and is left to the reader.
\end{proof}
Although the previous proposition related $\mu$ ``in the large'' with
$\mathcal{A}$ and perhaps suggests investigating $K$ from this perspective, we stated
it only for completeness as in this paper we will be only using the interval
$[0,1)$.

We already expressed $K$ as an integral of $1/x$.  The following lemma is
crucial to obtain other integral expressions.

For a positive integer $m$ with at least $l$ digits, we let $\mathsf{ld}_l(m)$
be the integer in $[b^{l-1},b^l)$ sharing with $m$ its $l$
``leading digits''.  If $m$ belongs to $\mathcal{A}$, so does
$\mathsf{ld}_l(m)$.
\begin{lemma}\label{lem1}
  For any non-zero $n\in\mathcal{A}$ of length $l(n)$: 
  \begin{equation*}
    \int_{[0,1)} \frac{1}{n + x}\,d\mu(x) =
   \sum\nolimits'_{\mathsf{ld}_{l(n)}(m) = n}\frac1m\;.
  \end{equation*}
\end{lemma}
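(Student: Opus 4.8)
The plan is to compute the integral directly from the definition of $\mu$. Fix the non-zero $n\in\mathcal{A}$; since $n\geq1$ the function $x\mapsto 1/(n+x)$ is non-negative and bounded by $1$ on $[0,1)$, so the atomic series expressing the restriction of $\mu$ to $[0,1)$ may be integrated term by term — legitimate by monotone convergence, and in any case the resulting double series of positive terms is dominated by $\mu([0,1))/n=\tfrac{b}{(b-N)n}<\infty$ (it is even a sub-sum of the convergent $K$). From the definition, when $0\in A$ the restriction of $\mu$ to $[0,1)$ is $\sum_{l\geq0}b^{-l}\sum_{n'\in\mathcal{A},\,n'<b^{l}}\delta_{n'/b^{l}}$, and when $0\notin A$ it is $\delta_{0}+\sum_{l\geq1}b^{-l}\sum_{n'\in A_{l}}\delta_{n'/b^{l}}$ (in the latter case the $l=0$ layer and all the atoms $\delta_{n'/b^{l}}$ with $n'\geq b^{l}$ have dropped out because $0\notin\mathcal{A}$). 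Integrating $1/(n+x)$ against an atom $b^{-l}\delta_{n'/b^{l}}$ produces $b^{-l}\cdot\tfrac{b^{l}}{nb^{l}+n'}=\tfrac{1}{nb^{l}+n'}$, so in both cases $\int_{[0,1)}\tfrac{d\mu(x)}{n+x}$ becomes a double sum $\sum_{l}\sum_{n'}\tfrac1{nb^{l}+n'}$ over the pairs $(l,n')$ indexing those atoms (the lone atom $\delta_{0}$ of the $0\notin A$ case being read as the pair $(l,n')=(0,0)$, for which $nb^{0}+0=n$).

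The heart of the argument is to recognise this double sum as the primed sum on the right, via the substitution $m=nb^{l}+n'$. Because $n\neq0$, adding any integer $<b^{l}$ to $nb^{l}$ leaves the leading digit of $n$ intact, so $l(m)=l(n)+l$ and hence $\mathsf{ld}_{l(n)}(m)=\lfloor m/b^{l}\rfloor=n$; conversely every positive integer $m$ with $\mathsf{ld}_{l(n)}(m)=n$ arises exactly once this way, with $l=l(m)-l(n)\geq0$ and $n'=m\bmod b^{l}$ — the exponent $l$ being forced since $t\mapsto\lfloor t/b\rfloor$ fixes no positive integer. It then remains only to match ``$m$ admissible'' with the index condition on $(l,n')$. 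If $0\in A$, the base-$b$ digits of $m=nb^{l}+n'$ are the digits of $n$ followed by the digits of $n'$ left-padded with zeros up to length $l$, so $m\in\mathcal{A}\iff n'\in\mathcal{A}$ (the padding costs nothing precisely because $0\in A$), which is exactly ``$n'\in\mathcal{A}$ and $n'<b^{l}$''. If $0\notin A$, admissibility of $m$ forces each of the $l$ low digits of $m$ — in particular the one in position $l-1$ — to lie in $A$, hence to be non-zero, so $n'$ has exactly $l$ digits, i.e.\ $n'\in A_{l}$; and the single term $m=n$ is precisely the image of the pair $(0,0)$, i.e.\ of $\delta_{0}$. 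Either way the substitution is a bijection onto $\{m\in\mathcal{A}:\mathsf{ld}_{l(n)}(m)=n\}$, which finishes the proof.

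The main obstacle is this bijection bookkeeping, carried cleanly through the $0\in A$ / $0\notin A$ dichotomy; concretely the two delicate points are the uniqueness of the decomposition $m=nb^{l}+n'$ (where one uses $n\neq0$) and the translation of admissibility of $m$ into the admissibility-and-length constraints on $n'$ already present in the description of the restriction of $\mu$ to $[0,1)$. Everything else is routine: the term-by-term integration, the identity $b^{-l}/(n+n'/b^{l})=1/(nb^{l}+n')$, and convergence (the right-hand side is a sub-sum of $K$). One point deserves an explicit word, lest the reader think a factor has been lost: when $0\in A$ the origin carries $\mu$-mass $\tfrac{b}{b-1}$ rather than $1$, but this is matched on the right by the geometric family $m=n,\,nb,\,nb^{2},\dots$ (the images of the pairs $(l,0)$, $l\geq0$), all admissible and all with leading digits $n$, whose reciprocals sum to $\tfrac{b}{b-1}\cdot\tfrac1n$.
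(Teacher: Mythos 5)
Your proposal is correct and follows essentially the same route as the paper: expand the integral using the layered definition of $\mu$ restricted to $[0,1)$, convert each atom's contribution via $b^{-l}/(n+n'/b^{l})=1/(nb^{l}+n')$, and match terms through the bijection $m=nb^{l}+n'$, treating the cases $0\in A$ and $0\notin A$ separately exactly as the paper does. Your extra care with the uniqueness of the decomposition, the admissibility bookkeeping, and the remark about the mass $b/(b-1)$ at the origin are all consistent with (and slightly more detailed than) the paper's argument.
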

\begin{proof}
  Let us first suppose $0\in A$.  The integral on the left-hand-side
  is the series
  $\sum_{l\geq0} b^{-l} \sum'_{p<b^l} 1/(n + p/b^l)$ where the prime indicates as
  usual that only
  admissible $p$'s are used.  The $l=0$ contributes only $p=0$ which gives
  $\frac1n$. Admissible integers $m$
  with $\mathsf{ld}_{l(n)}{m} = n$ are, if not equal to $n$ itself, in
  one-to-one correspondance with pairs $(l,p)$ with $l\geq1$, and $p<b^l$ admissible.
  The correspondance goes via the equality $m = n b^l + p$.

  If $0\notin A$, the left-hand-side is the sum of $\frac1n$ (which comes from
  $\delta_0$ in equation \eqref{eq:mu0notinA}) with $\sum_{l\geq1} b^{-l}
  \sum'_{b^{l-1}\leq p<b^l} 1/(n + p/b^l)$ (where the prime symbol means that
  $p$'s are admissible).  The integer $b^l n + p$ (with $n>0$, $0\leq p<b^l$)
  is admissible if and only if $p$ is admissible and verifies $p\geq b^{l-1}$.
  Hence the result.
\end{proof}

\section{The Kempner sums as alternating series.}

\begin{theorem}\label{thm:1}
  For all $\ell\geq1$ the Kempner-like series has value
  \begin{equation}\label{eq:main0}
    K = \sum_{0<n<b^{\ell-1}}\nolimits'\frac1n + \sum_{m=0}^\infty (-1)^m u_m 
                           \sum_{l(n)=\ell}\nolimits' \frac1{n^{m+1}}\;.
  \end{equation}
  The sequence of moments  $(u_m)$ is
  decreasing and converges to zero.  Thus, the alternating series has its terms
  decreasing towards zero in absolute value.
\end{theorem}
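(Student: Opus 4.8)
The plan is to start from the integral representation $K = \int_{[b^{-1},1)} x^{-1}\,d\mu(x)$ from \eqref{eq:1}, and to expand the set $[b^{-1},1)$ into the admissible intervals it meets. First I would use that $[b^{-1},1) = \bigcup'_{l(n)=1}[n/b,(n+1)/b)$ up to the part of $\mu$ at points not contributing; more cleanly, I would rescale. Writing $K = \sum'_{l(n)=1} \int_{[n,n+1)} y^{-1}\,d\mu_1(y)$ where $\mu_1$ is the pushforward of $\mu\restriction[b^{-1},1)$ under $x\mapsto bx$, and then recognizing $\mu_1$ on $[0,1)$ as $\mu$ itself (this is exactly the self-similarity of $\mu$ encoded in the discussion before Lemma \ref{lem1} and in the Proposition), one gets a first step $K = \sum'_{l(n)=1}\int_{[0,1)} (n+x)^{-1}\,d\mu(x)$. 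But that is precisely Lemma \ref{lem1} summed over $n$ with $l(n)=1$, which already gives $K = \sum'_{l(n)=1}\sum'_{\mathsf{ld}_1(m)=n} 1/m$, a tautology. The real content is to iterate: applying Lemma \ref{lem1} successively shows that for any $l\ge 1$, $K = \sum'_{0<n<b^{l-1}} 1/n + \sum'_{l(n)=l}\int_{[0,1)}(n+x)^{-1}\,d\mu(x)$, the first sum collecting the ``short'' admissible integers that have already been peeled off.

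The second, and decisive, step is to expand each integrand $1/(n+x)$ as a geometric (Taylor) series in $x/n$: for $x\in[0,1)$ and $n\ge b^{l-1}\ge 1$ we have $1/(n+x) = \sum_{m\ge 0} (-1)^m x^m n^{-m-1}$, which converges since $|x/n|<1$ (for $l=1$, $n\ge 1$; care is needed only if $x$ can equal... but $\mu$ is supported in $[0,1)$, so $x<1\le n$ strictly when $n\ge 1$, with the sole genuinely delicate case $n=1$, where $x<1$ still gives a convergent, though not geometric, series — this is the exceptional case flagged in Theorem \ref{main}). Integrating term by term against $d\mu$, which is legitimate by monotone/dominated convergence applied to the tails (the partial sums are dominated, or one simply notes $\int_{[0,1)} x^m\,d\mu = u_m \le u_0 = b/(b-N) < \infty$ and the series $\sum u_m n^{-m-1}$ converges), yields $\int_{[0,1)}(n+x)^{-1}\,d\mu(x) = \sum_{m\ge 0}(-1)^m u_m n^{-m-1}$. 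Summing over the finitely... rather, over the admissible $n$ with $l(n)=l$ (a set on which $n^{-m-1}$ summed gives $\beta_{l,m+1}=\sum'_{l(n)=l} n^{-m-1} < \infty$) and interchanging the two summations — justified because everything in sight is, up to the alternating sign, a convergent double series of the form $\sum_{m}\sum_{n} u_m n^{-m-1}$ with nonnegative terms bounded by $u_0\sum_n n^{-2}$ — produces exactly \eqref{eq:main0}.

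For the final assertions, strict decrease of $(u_m)$ and convergence to $0$: since $\mu$ is a positive measure supported in $[0,1)$ with at least one mass point strictly inside (indeed $\mu$ has infinitely many mass points accumulating appropriately, and in any case $\delta_0$ plus at least one point in $(0,1)$), we have $x^{m+1} \le x\cdot x^m$ pointwise on $[0,1)$ with strict inequality on the set $(0,1)$ which carries positive $\mu$-mass; integrating gives $u_{m+1} < u_m$. Convergence to $0$ follows from dominated convergence: $x^m \to 0$ pointwise on $[0,1)$ (including the convention at $x=0$) and $x^m \le 1 \in L^1(\mu\restriction[0,1))$, so $u_m \to \int_{[0,1)} 0\,d\mu = 0$. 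Hence the terms $(-1)^m u_m \beta_{l,m+1}$ of \eqref{eq:main0} decrease strictly to zero in absolute value, since the $\beta_{l,m+1}$ are themselves positive and (for $m\ge 1$, with the exceptional-case caveat) decreasing, or at worst bounded, in $m$.

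The main obstacle I anticipate is the clean bookkeeping of the iteration of Lemma \ref{lem1} — precisely, verifying that after peeling off levels $1,2,\dots,l-1$ the ``remainder'' integrals are indexed exactly by the admissible $n$ of length $l$, with the already-extracted reciprocals being exactly $\sum'_{0<n<b^{l-1}} 1/n$, i.e. that no admissible integer is double-counted or omitted. This is a matter of checking that the sets $\{m : \mathsf{ld}_{l(n)}(m)=n\}$ for $n$ ranging over length-$\le l-1$ admissibles partition $\mathcal{A}_{>0}\setminus\{m:l(m)\ge l\}$ together with the length-$l$ buckets, which is transparent from the leading-digit description but deserves one careful sentence. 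The term-by-term integration and the interchange of summations are routine given the uniform bound $u_m\le u_0$ and $\sum_n n^{-2}<\infty$.
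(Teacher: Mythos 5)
Your proposal follows essentially the paper's own route: the identity
\begin{equation*}
  K=\sum\nolimits'_{0<n<b^{l-1}}\frac1n+\sum\nolimits'_{l(n)=l}\int_{[0,1)}\frac{d\mu(x)}{n+x},
\end{equation*}
obtained from Lemma \ref{lem1}, followed by expansion of $1/(n+x)$ with termwise integration, and positivity plus dominated convergence for the claims about $(u_m)$. Note first that the bookkeeping you flag as the main obstacle dissolves if, as in the paper, you apply Lemma \ref{lem1} in one shot to all admissible $n$ of exact length $l$: every admissible $m$ with $l(m)\geq l$ lies in exactly one bucket $\{\mathsf{ld}_l(m)=n\}$, so no iteration or peeling is needed. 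Two of your justifications, however, should be repaired. (i) The set of admissible $n$ with $l(n)=l$ is \emph{finite} (it has $N_1N^{l-1}$ elements), so interchanging $\sum'_{l(n)=l}$ with $\sum_m$ requires no double-series or Fubini argument at all; the absolute-convergence argument you offer is in fact unavailable in the exceptional case $l=1$, $1\in A$, $b-1\in A$, where $u_m\asymp 1/m$ by Theorem \ref{main} and $\sum_m u_m\beta_{1,m+1}$ diverges (the bound ``$u_0\sum_n n^{-2}$'' controls each $m$-term but not the sum over $m$). (ii) For the delicate term $n=1$, the alternative ``the series $\sum_m u_m n^{-m-1}$ converges'' is not by itself a licence for termwise integration, and the underlying absolute convergence again fails when $b-1\in A$; your other remark is the right one: the partial sums $\sum_{m=0}^M(-x)^m=(1-(-x)^{M+1})/(1+x)$ are bounded by $2$ on $[0,1)$, so dominated convergence applies --- this is a valid variant of the paper's argument, which instead brackets the integral between integrals of consecutive alternating partial sums and uses $u_m\downarrow0$. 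Finally, for the last assertion, ``at worst bounded'' is not enough: you need that $\beta_{l,m+1}$ is non-increasing in $m$, which holds since every admissible $n$ of length $l$ satisfies $n\geq1$, and then the product with the strictly decreasing positive sequence $(u_m)$ is strictly decreasing to zero.
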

\begin{proof}
  The fact that $u_m>u_{m+1}$ is clear from the integral representation
  \eqref{eq:moment}.  The
  convergence to zero also follows from \eqref{eq:moment} using either
  dominated convergence, or via the following elementary argument: $\forall\epsilon\in(0,1)$,
  $\int_{[0,1)} x^m\,d\mu(x)\leq (1 - \epsilon)^m u_0 +
  \mu([1-\epsilon,1))$, so the limit superior is at most
  $\mu([1-\epsilon,1))$ for any $\epsilon>0$, hence is zero.

  As a first step towards \eqref{eq:main0}, we gather all contributions
  $n^{-1}$ from those
  $n$'s which are $\geq b^{\ell-1}$ and share a common $\mathsf{ld}_l(n)$.
  Lemma \ref{lem1} then gives the following expression:
\begin{equation}
  \forall \ell\geq1\quad K = \sum_{0<n<b^{\ell-1}}\nolimits'\frac1n +
      \sum_{l(n)=\ell}\nolimits' \int_{[0,1)} \frac{1}{n + x}\,d\mu(x)\;.
\end{equation}
Even for  $n=1$ we can always expand:
\begin{equation*}
  \int_{[0,1)} \frac{1}{n + x}\,d\mu(x) = 
  \frac{u_0}{n} - \frac{u_1}{n^2} + \frac{u_2}{n^3} + \dots\quad.
\end{equation*}
Indeed, $1/(n+x) = 1/n - x/n^2 + x^2/n^3 - \dots$ is an alternating series
whose partial sums provide alternatively upper and lower bounds for the
integrand $(n+x)^{-1}$.  Hence $\int_{[0,1)} \frac{1}{n + x}\,d\mu(x)$ is
alternatively bounded above and below by the partial sums of the series $\sum
(-1)^m u_m/n^{m+1}$, depending on the parity of the number of kept terms.  The
latter series converges, also for $n=1$, as $u_m$ decreases to zero.  Hence
the above displayed formula holds, also for $n=1$.  For $n>1$, the more usual argument is to
mention the uniform convergence on $[0,1]$ of the Taylor series for
$x\mapsto(n+x)^{-1}$.

Summing over all admissible integers $n$ of exact length $\ell$ we obtain the
formula \eqref{eq:main0}.
\end{proof}
\begin{remark}
  The contribution of $m=0$ in \eqref{eq:main0} is the product of $u_0= \frac
  b{b-N}$ by the sum of the reciprocals of the admissible integers of length
  $\ell$.  It is natural to separate it from the rest of the series and
  combine it with the contribution of the numbers with fewer digits.  See the
  statement of Theorem \ref{thm:main}.  We say that the formula is ``at level
  $\ell$''.
\end{remark}
In the rest of the paper we explain how to compute the coefficients $u_m$ so
as to transform Theorem \ref{thm:1} into a numerical algorithm.  We also
evaluate theoretically their order of magnitude which allows to control better
the numerical implementation.

\section{A recurrence for the computation and estimation of moments.}

The key to our subsequent investigations is the
following:
\begin{lemma}\label{lem2}
  Let $f$ be a bounded function on $[0,b)$.  Then
  \begin{equation*}
    \int_{[0,1)} f(bx)d\mu(x) = f(0) + 
             \int_{[0,1)} \frac1b \sum_{a\in A} f(a+x)d\mu(x)\;.
  \end{equation*}
\end{lemma}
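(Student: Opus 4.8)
The plan is to prove the identity by directly unpacking both sides against the explicit description of $\mu$ as a weighted sum of Dirac masses, and observing that the right-hand side is just a reorganisation of the left-hand side obtained by ``peeling off'' the top digit of each support point. First I would treat the case $0\in A$, where $\mu=\sum_{l\geq0}b^{-l}\sum_{n\in\mathcal A}\delta_{n/b^l}$. The left-hand side becomes $\sum_{l\geq0}b^{-l}\sum_{n\in\mathcal A}f(bn/b^l)=\sum_{l\geq0}b^{-l}\sum_{n\in\mathcal A}f(n/b^{l-1})$. The contribution of $l=0$ is $\sum_{n\in\mathcal A}f(bn)$; since $f$ is only assumed bounded on $[0,b)$ this term needs the substitution $n=0$ to be singled out, giving $f(0)$, plus the terms with $n\geq1$ which I will momentarily set aside. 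For $l\geq1$ I re-index by writing $n=n'b + a$ with $a\in A$ the last base-$b$ digit and $n'\in\mathcal A\cup\{0\}$ admissible (here is where $0\in A$ is used: $n'$ may be $0$), so that $n/b^{l-1}=a/b^{l-1}+n'/b^{l-2}$; shifting $l\mapsto l+1$ this exactly rebuilds $\frac1b\sum_{a\in A}\int_{[0,1)}f(a+x)\,d\mu(x)$, with the $l=0$ piece $\sum_{n'\in\mathcal A}f(bn')$ now reappearing as the $x$ at integer support points of $\mu$ on $[0,1)$ — but wait, $\mu$ restricted to $[0,1)$ only charges the origin among integers, so in fact the leftover $n\geq1$ terms of the $l=0$ line above must cancel against something. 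The cleaner bookkeeping is to note that the $l=0$ line of the LHS, $\sum_{n\in\mathcal A}f(bn)$, is literally $\sum_{a\in A}f(a) + \sum_{n\in\mathcal A,\,n\geq b}f(bn)$, and $\sum_{a\in A}f(a)$ is the $l=1$, $x=0$ part of $\frac1b\sum_a\int f(a+x)d\mu$ scaled by the mass $b/(b-1)$ of $\delta_0$ in $\mu\big|_{[0,1)}$ divided by $b$, i.e. $\frac1b\cdot\frac{b}{b-1}\sum_a f(a)=\frac1{b-1}\sum_a f(a)$; so the geometric factors have to be matched carefully rather than term-by-term.

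Given that subtlety, I would instead organise the proof around the self-similarity relation already recorded in the excerpt just before the Proposition: \emph{if $x$ is in the support of $\mu$ and is not an integer, then $bx$ is in the support and the weight of $\delta_{bx}$ is $b$ times that of $\delta_x$}. Concretely, split $\mu\big|_{[0,1)}=w_0\,\delta_0+\nu$ where $\nu$ is supported on $(0,1)$ and $w_0$ is the mass at the origin ($w_0=b/(b-1)$ if $0\in A$, $w_0=1$ if $0\notin A$). For the non-integer part, pushing $\nu$ forward by $x\mapsto bx$ and using the weight-scaling relation shows $\frac1b(Bx)_*\nu$ — more precisely $\int_{(0,1)}f(bx)\,d\nu(x)$ — equals $\int_{[0,b)}f(y)\,d(\text{scaled pushforward})$, and that scaled pushforward, by the digit-shift description, is exactly $\mu\big|_{[0,b)}$ restricted to $(0,b)$, which decomposes along $[a,a+1)$ for $a\in A$ as the translates of $\mu\big|_{[0,1)}$ by the Proposition. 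That rewrites $\int_{(0,1)}f(bx)\,d\nu(x)=\frac1b\sum_{a\in A}\int_{[0,1)}f(a+x)\,d\mu(x)-\frac1b\sum_{a\in A}w_0 f(a)\cdot[\text{correction}]$. The $\delta_0$ part contributes $w_0 f(0)$ to the LHS, and I must check it supplies precisely the missing $f(0)$ together with the pieces needed to complete the $a\in A$ sum on the RHS; the arithmetic identity $w_0 = 1 + w_0/b$ (valid in both cases, since $b/(b-1)=1+1/(b-1)=1+(b/(b-1))/b$ and $1=1+1/b$ fails — so actually $w_0=1+w_0/b$ holds only when $0\in A$, and when $0\notin A$ one instead has the origin carrying weight exactly $1$ with \emph{no} self-similar tail) is what makes the two cases genuinely different, exactly as the text warns.

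The main obstacle is therefore not any deep idea but the careful separation of the Dirac mass at $0$ and the matching of geometric weights, handled differently according to whether $0\in A$. I expect the $0\in A$ case to close immediately once the re-indexing $n=n'b+a$ is combined with the observation that the extra mass $1/(b-1)$ at the origin absorbs precisely the $l=0$ overflow. For $0\notin A$ I would run the parallel computation starting from $\mu=\delta_0+\sum_{l\geq0}b^{-l}\sum_{n\in\mathcal A,\,l(n)\geq l}\delta_{n/b^l}$: the $\delta_0$ gives the isolated $f(0)$ on the RHS directly, the $l=0$ sum over admissible $n$ (all with $l(n)\geq0$, i.e. all of $\mathcal A$) produces $\sum_{a\in A}f(a)+\dots$ supplying the length-one part of the $\frac1b\sum_a\int f(a+x)d\mu$ term, and the constraint $l(n)\geq l$ is exactly what guarantees that after the digit-shift $n\mapsto n'$ with $n=n'b+a$ the resulting $n'$ still satisfies $l(n')\geq l-1$, so the recursion reproduces $\mu$ on $[0,1)$ faithfully. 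In both cases the interchange of summation and integration is legitimate because $f$ is bounded and $\mu\big|_{[0,1)}$ is a finite measure, so Fubini/Tonelli applies with no further comment.
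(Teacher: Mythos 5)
Your proposal does not close, and the gaps are concrete. First, in the direct computation for $0\in A$ you peel off the \emph{last} base-$b$ digit, $n=n'b+a$, which turns $f(bx)$ into $f\bigl(a/b^{l-1}+n'/b^{l-2}\bigr)$ — never an expression of the form $f(a+x)$ with $a\in A$ an integer and $x\in[0,1)$, which is what the right-hand side requires. The mechanism the identity actually rests on is peeling off the \emph{leading} digit: for a support point $x=n/b^l\in[0,1)$ one writes $bx=a+m/b^{l-1}$ where $a$ is the top digit of the $l$-digit string of $n$ (possibly $0$ when $0\in A$) and $m$ the remaining digits, and this is exactly how the paper's proof proceeds, by expanding $\int_{[0,1)}f(bx)\,d\mu$ from the definition and re-indexing. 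A second slip in that paragraph: the ``leftover $n\geq1$ terms of the $l=0$ line'' do not exist, because the integral is over $[0,1)$ and the $l=0$ layer of $\mu$ only charges that interval at the origin; there is nothing to cancel.

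Your fallback via self-similarity and the Proposition (push $\mu|_{(0,1)}$ forward by $x\mapsto bx$, rescale by $b$, then decompose $\mu$ on the intervals $[a,a+1)$) is a legitimate alternative route, and for $0\in A$ it is nearly complete: there $b\,(x\mapsto bx)_*\bigl(\mu|_{(0,1)}\bigr)=\mu|_{(0,b)}$, the correction is the single term $\tfrac{w_0}{b}f(0)$ coming from the $a=0$ translate (not a sum over all $a\in A$ as in your displayed formula), and $w_0(1-\tfrac1b)=1$ finishes. But the case $0\notin A$, which you only sketch, is where your accounting is wrong: there the support of $\mu$ in $(0,1)$ lies in $[b^{-1},1)$, so the scaled pushforward equals $\mu|_{[1,b)}$ and \emph{not} $\mu|_{(0,b)}$; moreover the $l=0$ layer of $\mu$ is supported on admissible integers $\geq1$ and contributes nothing to an integral over $[0,1)$, so it cannot ``produce $\sum_{a\in A}f(a)$''. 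That term in fact comes from the $l=1$ layer (the points $a/b$ with weight $b^{-1}$), and in the paper's proof it is precisely the piece that upgrades $\int_{(0,1)}f(a+x)\,d\mu$ to the full $\int_{[0,1)}f(a+x)\,d\mu$ including the point mass at zero. Until you redo the $0\notin A$ case with the leading-digit shift (or with the corrected pushforward identity onto $[1,b)$), the lemma is not proved.
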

\begin{proof}
  The integrals are absolutely convergent because we proved already that
  $\mu([0,1)) = u_0 < \infty$.  Let us first suppose that $0\in A$.  Then the
  left-hand-side (L.H.S). is, from the definition of $\mu$, $\sum_{l\geq 0} \sum_{n\in
    \mathcal{A}, l(n)\leq l} b^{-l}f(n \cdot b^{1-l})$. Indeed the condition
  $l(n)\leq l$ is equivalent to $0\leq n\cdot b^{-l} < 1$.  From $l=0$ we get a
  term $f(0)$. For $l\geq1$, $l(n)\leq l$, $n \cdot b^{1-l}$ is uniquely
  expressible as $a + m \cdot b^{1-l}$ with $a\in A$ (inclusive of zero) and
  $m$ admissible, $m<b^{l-1}$ (so $m=0$ if $l=1$).  This gives the
  right-hand-side as
  each $a \in A$ contributes a sum indexed by $l-1\geq0$ and admissible
  integers $<b^{l-1}$.

  If $0\notin A$, the L.H.S. is (using \eqref{eq:mu0notinA}) $f(0)+\sum_{l\geq
    1} \sum_{n\in \mathcal{A}, b^{l-1}\leq n< b^{l}} b^{-l}f(n \cdot
  b^{1-l})$.  For $l\geq2$, and $n$ admissible of length $l$, we write
  uniquely $n/b^{1-l} = a + m/b^{l-1}$ with $a\in A$ and $m$ admissible of
  exact length $l-1$.  For fixed digit $a$ the sum over $l\geq2$ will give us
  $b^{-1}\int_{(0,1)}f(a+x)d\mu(x)$.  An additional $b^{-1} f(a)$ is
  contributed by the $l=1$ part with $n=a$ in the original sum thus allowing
  to extend the integration range in that integral from $(0,1)$ to $[0,1)$.
\end{proof}

We shall need the moments $\gamma_j = \sum_{a\in A} a^j$ of $\sum_{a\in A} \delta_a$.
\begin{proposition}\label{prop:recursion}
  The following linear recursion holds:
  \begin{equation*}
    (b^{m+1} - N) u_m = \sum_{j=1}^{m} \binom{m}{j} \gamma_j u_{m-j}\;,\qquad(m\geq1),
  \end{equation*}
  with 
  $u_0 = \frac{b}{b-N}$. In particular $u_1 = \frac{\sum_{a\in A}
    a}{b^2-N}\frac{b}{b-N}$ holds.
\end{proposition}
\begin{proof}
  The value of $u_0$ was already indicated previously.  The value of $u_1$
  follows from the recursion. We apply Lemma \ref{lem2} to the power
  function and obtain, for $m\geq1$:
  \begin{align*}
    b^{m+1} u_m &= \int_{[0,1)} \sum_{a\in A} (a + x)^m \,d\mu(x)\\
               &= \sum_{j=0}^{m} \binom{m}{j} u_{m-j}\sum_{a\in A} a^j\\
               &= N u_m + \sum_{j=1}^{m}  \binom{m}{j}u_{m-j}\gamma_j\;,
  \end{align*}
  which gives the wished-for formula.
\end{proof}
\begin{remark}\label{rem:exp}
  Applying Lemma \ref{lem2} to the function $f(x) = e^{tx}$ we obtain the
  following functional equation for the exponential generating function $E(t)
  = \int_{[0,1)} e^{tx}\,d\mu(x)$:
  \begin{equation*}
    E(bt) = 1 +  \frac1b(\sum_{a\in A} e^{at}) E(t)\;.
\end{equation*}
  It is equivalent to the recursion from Proposition \ref{prop:recursion}.
\end{remark}

\begin{proposition}\label{prop:estimate}
  Let $f=\max A$.
  Let $\lambda_m$ for $m\geq1$ be defined as:
  \begin{equation}\label{eq:lambdam}
    \lambda_m = m \Bigl(\frac{b-1}f\Bigr)^{m-1} \frac{u_{m-1}}{u_0}\;.
  \end{equation}
  They satisfy for $m\geq2$,
  \begin{equation}
    (b^{m} - N) \lambda_{m}  =
    \sum_{j=1}^{m-1} \binom{m}{j}\frac{\gamma_j}{f^j}(b-1)^j\lambda_{m-j}\;,
  \end{equation}
  and the bounds $b^{-1} < \lambda_m < 1$ hold (but
  $\lambda_1=1$).
\end{proposition}
\begin{proof}
  The recursion formula for $\lambda_{m+1}$ follows readily from the one
  of $u_m$.  One only needs the identity
  \begin{equation*}
    \frac{m+1}{m+1-j}\binom{m}{j} = \binom{m+1}{j}\;,
  \end{equation*}
  and then an index replacement $m+1\rightarrow m$ to go back from
  $\lambda_{m+1}$ to $\lambda_m$.

Let $m\geq2$ and suppose $\lambda_j\leq 1$
  is known for $1\leq j<m$ (which is true for $m=2$). We then obtain
\begin{equation*}
    (b^{m} - N) \lambda_{m}
    \begin{aligned}[t]
&\leq \sum_{j=1}^{m-1} \binom{m}{j}\gamma_j(\frac{b-1}{f})^j \\
&= \sum_{a\in
        A_1} \left((\frac{a(b-1)}{f} + 1)^m - \frac{a^m(b-1)^m}{f^m} -
        1\right)\;.
    \end{aligned}
\end{equation*}
The sum in this last equation can be extended from $A_1$ to $A_1\cup\{0\}$,
even if $0\notin A$, with no change, as $m\geq1$.  We do so; hence, we have
$N_1 +1$ contributions.

Let us compare for $a\in A_1$ the term
$-(a(b-1)/f)^m$ with $(a'(b-1)/f +1)^m$ which is contributed by the largest
admissible digit $a'<a$, or by $a'=0$ if $a=\min A_1$ (we leave aside the $-1$'s for the moment).  It is easily checked that
$a'(b-1)/f + 1 \leq a(b-1)/f$ with equality only if $a'=a-1$ and $f=b-1$.
Proceeding down from $a=f$ we see that we can obtain $b^m$ as final result
only if at all stages there was exact compensation.  This happens if and only
if $f=b-1$ and for each $a\in A_1$, $a>1$ one has $a-1 \in A_1$.  This means
that all non-zero digits are in $A$.  Hence, in all other cases
we have a total contribution less than $b^m$.  With the $-1$'s now counted in,
we obtain $(b^m-N)\lambda_m<b^m - N_1 - 1$, hence $\lambda_m < 1$.

Let us return to the exceptional case where all positive digits are admissible
(so $0$ is not admissible). In particular $N = N_1=b-1$.  Then our sum gives
$b^m$ and we had left aside $N_1+1 = N+1 = b$ negative units.  So $(b^m -
(b-1))\lambda_m\leq b^m - b$ and we also obtain in this exceptional case
$\lambda_m<1$.

Hence the recurrence hypothesis $\lambda_j\leq1$ is verified for $j=m$ in the
stronger form $\lambda_m<1$, $m\geq2$.

For the lower bound, as $\gamma_j\geq f^j$, and $N\geq1$, we have $\lambda_m\geq
\xi_m$ for all $m$ if we set $\xi_1=1$ and define $\xi_m$ for $m\geq2$ by the recurrence
  \begin{equation*}
    (b^{m} - 1) \xi_{m}  = \sum_{j=1}^{m-1} \binom{m}{j}(b-1)^j\xi_{m-j} \;.
  \end{equation*}
  These $\xi_m$'s are the $\lambda_m$'s for the case of $A$ being a singleton,
  i.e., $N=1$. We pick for concreteness $A=\{b-1\}$. In that case the
  admissible integers are the $b^l -1$ for $l\geq1$.  The part of the measure
  $\mu$ inside the open unit interval has its support on the $1 - b^{-l}$'s
  with respective weights the $b^{-l}$. The moments $\theta_m$ are thus
  $\theta_0= b/(b-1)$ and for $m\geq1$: $\theta_m = \sum_{l=1}^\infty b^{-l}
  (1-b^{-l})^m$.  Formula \eqref{eq:lambdam} gives for $m\geq1$:
\begin{equation*}
   \xi_{m+1} = (m+1)\frac{\theta_m}{\theta_0} = (m+1)(1 -
   b^{-1})\sum_{l=1}^\infty b^{-l} (1-b^{-l})^m \;.
\end{equation*}
Let $t_0=0$ and $t_l = 1 - b^{-l}$ for $l\geq1$. For $m\geq1$ we have
\begin{equation*}
  \int_0^1 t^m\, dt = \sum_{l=1}^\infty \int_{t_{l-1}}^{t_l} t^m\,dt <
  \sum_{l=1}^\infty (t_l - t_{l-1}) t_l^m=
  (1 - b^{-1})b\sum_{l=1}^\infty b^{-l} (1-b^{-l})^m\;,
\end{equation*}
and we thus obtain $\xi_{m+1} > b^{-1}$, hence $\lambda_{m+1}>b^{-1}$.
\end{proof}

We are now in a position to complete the proof of our main theorem.
\begin{proof}[Proof of \textup{Theorem \ref{thm:main}}]
  Most has already been established so far: equation \eqref{eq:main} is, using
  $u_0=b/(b-N)$, the equation \eqref{eq:main0} from Theorem \ref{thm:1}.  The
  recursion \eqref{eq:recur} is the statement from Proposition
  \ref{prop:recursion}.  The lower and upper bound for $\lambda_m$ were done
  in Proposition \ref{prop:estimate}.

  The geometric convergence for $\ell\geq2$ is a corollary to $\lambda_m\leq1$
  and $\beta_{\ell,m+1}=O((1/b^{\ell-1})^m)$ with $b^{\ell-1}\geq b>1$.  If $\ell=1$ we
  still get geometric convergence from the $\beta_{\ell,m+1}$ if $1\notin A$ as
  then $\beta_{1,m+1}=O(1/2^m)$.
  And we get geometric convergence from the $u_m$ if $f < b-1$.  So the only
  way not to have geometric convergence in equation \eqref{eq:main} is that
  $\ell=1$, $1\in A$, $b-1\in A$.

  There only remains to prove that the sequence $(\lambda_m/m)$ is decreasing.
  If $f = \max A = b-1$ this follows from the analogous fact for $(u_m)$ as it
  then differs from $(\lambda_{m+1}/(m+1))$ only by some multiple.  Our
  measure $\mu$ is composed of weighted Dirac deltas at the $n/b^\ell$'s for
  $n$ admissible.  In the interval $[0,1)$, the supremum of those fractions is
  $f/(b-1)$.  So let us imagine we scale all those fractions by $(b-1)/f$ so
  that their (unattained) supremum becomes $1$, keeping the same weights.  We
  see that the $\lambda_{m+1}/(m+1)$, $m\geq0$ are up to common multiple the
  moments of this new measure on $[0,1)$.  The result follows.
\end{proof}

\section{Related works, generalizations and future directions.}

For the special case of $N=1$, the methods of this paper do not have much
numerical interest it seems (but the
quantities $u_m$'s do have quite interesting properties), as
$\sum_{n\geq1} (b^n-1)^{-1}$ is already geometrically convergent, and even
better admits via the Clausen identity \cite{clausen1828} a very efficient
representation (using $q=b^{-1}$):
  \begin{equation*}
    \sum_{n=1}^\infty \frac{q^n}{1-q^n} = \sum_{n=1}^\infty q^{n^2}\frac{1 +
      q^{n}}{1-q^n}\;.
  \end{equation*}
For $b=2$,
  $q=1/2$, the sum is known as the Erdös-Borwein constant.  According to a
  theorem of Borwein \cite{borwein} these sums for $q=b^{-1}$, $b>1$ integer,
  are irrational numbers.  Analogous results for $N>1$ are unknown to this author.

Fischer \cite{fischer}
obtained the following representation of the value $K$ of the original Kempner
series excluding nine's from the decimal digits:
\begin{equation}\label{eq:fischer}
  K = \beta_0 \log 10 - \sum_{m=2}^\infty 10^{-m} \beta_{m-1} \zeta(m)\;,
\end{equation}
where $\beta_0=10$ and the $\beta_m$'s satisfy the linear recurrence:
\begin{equation}\label{eq:fischerrec}
  \sum_{k=1}^m \binom{m}{k}(10^{m-k+1}-10^k + 1)\beta_{m-k} = 10(11^m -10^m)\;.
\end{equation}
During the write-up of the first version of this paper, the author became
aware of these formulas of Fischer and quoted them as somewhat similar to
Theorem \ref{thm:1}.  It is much more than a similarity: there is actually an
intimate link with the present work. Indeed the Fischer coefficients $\beta_m$
are the ``complementary moments'' $\int_{[0,1)}(1-x)^m\,d\mu(x)$ on the unit
interval of the measure $\mu$ which is at the core of the approach of the
present contribution; $\mu$ defines a linear functional on continuous
functions on the unit interval which one can prove is the one at the heart of
the work of Fischer.

Applying the same method as was done in Proposition \ref{prop:recursion} (or
using Remark \ref{rem:exp}) gives a recurrence formula for the
$\beta_m$'s.  This recurrence formula is not immediately identical with
\eqref{eq:fischerrec}, but implies it after additional manipulations.
The details of these steps are a bit lengthy to reproduce here, see
\cite{burnoldigamma}.
There, the Fischer formulas
\eqref{eq:fischer} and \eqref{eq:fischerrec} have been extended to the full
generality of the present article.  In particular the main term of
\eqref{eq:fischer}, $10\log(10)$, is replaced by $b\log(b)/(b-N)$,
so $b\log(b)$ when exactly one digit $d$ is excluded.  The cases $d=0$ and
$d=b-1$ are the more favorable ones regarding the rapidity of convergence of the
``corrective'' series generalizing \eqref{eq:fischer}.  The author has
obtained asymptotic expansions for $b\to\infty$ with fixed $d$, or with
$d=b-1$ in further works (\cite{burnollargeb}, \cite{burnolasymptotic}).

In another direction, the methods leading to Theorem \ref{thm:main} have been
extended by the author to Irwin series (\cite{burnolirwin}) and to the case of
a string of two consecutive digits (\cite{burnolone42}) which either is excluded or
is authorized exactly once.  The Allouche-Hu-Morin theorem about the behavior
when the number of occurrences of a given string goes to infinity was given a
completely different proof by the author (\cite{burnolblocks}).

\paragraph{\textbf{Acknowledgements.}}
The author thanks the referees and the AMM editors for their remarks which have been of a great help in
improving the presentation of the results.

\singlespacing
\small

\providecommand\bibcommenthead{}
\def\blocation#1{\unskip}


\begin{thebibliography}{30}
\ifx \bisbn   \undefined \def \bisbn  #1{ISBN #1}\fi
\ifx \binits  \undefined \def \binits#1{#1}\fi
\ifx \bauthor  \undefined \def \bauthor#1{#1}\fi
\ifx \batitle  \undefined \def \batitle#1{#1}\fi
\ifx \bjtitle  \undefined \def \bjtitle#1{#1}\fi
\ifx \bvolume  \undefined \def \bvolume#1{\textbf{#1}}\fi
\ifx \byear  \undefined \def \byear#1{#1}\fi
\ifx \bissue  \undefined \def \bissue#1{#1}\fi
\ifx \bfpage  \undefined \def \bfpage#1{#1}\fi
\ifx \blpage  \undefined \def \blpage #1{#1}\fi
\ifx \burl  \undefined \def \burl#1{\textsf{#1}}\fi
\ifx \doiurl  \undefined \def \doiurl#1{\url{https://doi.org/#1}}\fi
\ifx \betal  \undefined \def \betal{\textit{et al.}}\fi
\ifx \binstitute  \undefined \def \binstitute#1{#1}\fi
\ifx \binstitutionaled  \undefined \def \binstitutionaled#1{#1}\fi
\ifx \bctitle  \undefined \def \bctitle#1{#1}\fi
\ifx \beditor  \undefined \def \beditor#1{#1}\fi
\ifx \bpublisher  \undefined \def \bpublisher#1{#1}\fi
\ifx \bbtitle  \undefined \def \bbtitle#1{#1}\fi
\ifx \bedition  \undefined \def \bedition#1{#1}\fi
\ifx \bseriesno  \undefined \def \bseriesno#1{#1}\fi
\ifx \blocation  \undefined \def \blocation#1{#1}\fi
\ifx \bsertitle  \undefined \def \bsertitle#1{#1}\fi
\ifx \bsnm \undefined \def \bsnm#1{#1}\fi
\ifx \bsuffix \undefined \def \bsuffix#1{#1}\fi
\ifx \bparticle \undefined \def \bparticle#1{#1}\fi
\ifx \barticle \undefined \def \barticle#1{#1}\fi
\bibcommenthead
\ifx \bconfdate \undefined \def \bconfdate #1{#1}\fi
\ifx \botherref \undefined \def \botherref #1{#1}\fi
\ifx \url \undefined \def \url#1{\textsf{#1}}\fi
\ifx \bchapter \undefined \def \bchapter#1{#1}\fi
\ifx \bbook \undefined \def \bbook#1{#1}\fi
\ifx \bcomment \undefined \def \bcomment#1{#1}\fi
\ifx \oauthor \undefined \def \oauthor#1{#1}\fi
\ifx \citeauthoryear \undefined \def \citeauthoryear#1{#1}\fi
\ifx \endbibitem  \undefined \def \endbibitem {}\fi
\ifx \bconflocation  \undefined \def \bconflocation#1{#1}\fi
\ifx \arxivurl  \undefined \def \arxivurl#1{\textsf{#1}}\fi
\csname PreBibitemsHook\endcsname

\bibitem[\protect\citeauthoryear{Alexander}{1971}]{alexander1971}
\begin{barticle}
\bauthor{\bsnm{Alexander}, \binits{R.}}:
\batitle{Remarks about the digits of integers}.
\bjtitle{J. Austral. Math. Soc.}
\bvolume{12},
\bfpage{239}--\blpage{241}
(\byear{1971})
\end{barticle}
\endbibitem

\bibitem[\protect\citeauthoryear{Allouche and Morin}{2024}]{allouchemorin2023}
\begin{barticle}
\bauthor{\bsnm{Allouche}, \binits{J.-P.}},
\bauthor{\bsnm{Morin}, \binits{C.}}:
\batitle{Kempner-like harmonic series}.
\bjtitle{The American Mathematical Monthly},
9 pp. 
(\byear{2024})
\doiurl{10.1080/00029890.2024.2380232}
\end{barticle}
\endbibitem

\bibitem[\protect\citeauthoryear{Allouche et~al.}{2024}]{allouchehumorin2024}
\begin{barticle}
\bauthor{\bsnm{Allouche}, \binits{J.-P.}},
\bauthor{\bsnm{Hu}, \binits{Y.}},
\bauthor{\bsnm{Morin}, \binits{C.}}:
\batitle{Ellipsephic harmonic series revisited}.
\bjtitle{Acta Math. Hungar.}
\bvolume{173},
\bfpage{461}--\blpage{470}
(\byear{2024})
\doiurl{10.1007/s10474-024-01448-5}
\end{barticle}
\endbibitem

\bibitem[\protect\citeauthoryear{Baillie}{1979}]{baillie1979}
\begin{barticle}
\bauthor{\bsnm{Baillie}, \binits{R.}}:
\batitle{Sums of reciprocals of integers missing a given digit}.
\bjtitle{Amer. Math. Monthly}
\bvolume{86}(\bissue{5}),
\bfpage{372}--\blpage{374}
(\byear{1979})
\doiurl{10.2307/2321096}
\end{barticle}
\endbibitem

\bibitem[\protect\citeauthoryear{Baillie}{2008}]{baillie2008}
\begin{botherref}
\oauthor{\bsnm{Baillie}, \binits{R.}}:
Summing the curious series of {K}empner and {I}rwin
(2008).
\url{https://arxiv.org/abs/0806.4410}
\end{botherref}
\endbibitem

\bibitem[\protect\citeauthoryear{Behforooz}{1995}]{behforooz1995}
\begin{barticle}
\bauthor{\bsnm{Behforooz}, \binits{G.H.}}:
\batitle{Thinning {O}ut the {H}armonic {S}eries}.
\bjtitle{Math. Mag.}
\bvolume{68}(\bissue{4}),
\bfpage{289}--\blpage{293}
(\byear{1995})
\end{barticle}
\endbibitem

\bibitem[\protect\citeauthoryear{Borwein}{1992}]{borwein}
\begin{barticle}
\bauthor{\bsnm{Borwein}, \binits{P.B.}}:
\batitle{On the irrationality of certain series}.
\bjtitle{Math. Proc. Cambridge Philos. Soc.}
\bvolume{112}(\bissue{1}),
\bfpage{141}--\blpage{146}
(\byear{1992})
\doiurl{10.1017/S030500410007081X}
\end{barticle}
\endbibitem

\bibitem[\protect\citeauthoryear{Burnol}{2024c}]{burnolirwin}
\begin{botherref}
\oauthor{\bsnm{Burnol}, \binits{J.-F.}}:
Measures for the summation of {I}rwin series
(2024).
\url{https://arxiv.org/abs/2402.09083}
\end{botherref}
\endbibitem

\bibitem[\protect\citeauthoryear{Burnol}{2025}]{burnolone42}
\begin{barticle}
\bauthor{\bsnm{Burnol}, \binits{J.-F.}}:
\batitle{Summing the “exactly one 42” and similar subsums of the harmonic
  series}.
\bjtitle{Advances in Applied Mathematics}
\bvolume{162},
\bfpage{102791}
(\byear{2025})
\doiurl{10.1016/j.aam.2024.102791}
{\href{https://arxiv.org/abs/2402.14761}{{arXiv:2402.14761}}}
\end{barticle}
\endbibitem

\bibitem[\protect\citeauthoryear{Burnol}{2024d}]{burnollargeb}
\begin{botherref}
\oauthor{\bsnm{Burnol}, \binits{J.-F.}}:
Sur l'asymptotique des sommes de {K}empner pour de grandes bases
(2024).
\url{https://arxiv.org/abs/2403.01957}
\end{botherref}
\endbibitem

\bibitem[\protect\citeauthoryear{Burnol}{2024a}]{burnoldigamma}
\begin{barticle}
\bauthor{\bsnm{Burnol}, \binits{J.-F.}}:
\batitle{Digamma function and general {F}ischer series in the theory of
  {K}empner sums}.
\bjtitle{Expositiones Mathematicae}
\bvolume{42}(\bissue{6}),
\bfpage{125604}
(\byear{2024})
\doiurl{10.1016/j.exmath.2024.125604}
{\href{https://arxiv.org/abs/2403.03912}{{arXiv:2403.03912}}}
\end{barticle}
\endbibitem

\bibitem[\protect\citeauthoryear{Burnol}{2024e}]{burnolasymptotic}
\begin{botherref}
\oauthor{\bsnm{Burnol}, \binits{J.-F.}}:
Un d\'eveloppement asymptotique des sommes harmoniques de {K}empner-{I}rwin
(2024).
\url{https://arxiv.org/abs/2404.13763}
\end{botherref}
\endbibitem

\bibitem[\protect\citeauthoryear{Burnol}{2024b}]{burnolblocks}
\begin{botherref}
\oauthor{\bsnm{Burnol}, \binits{J.-F.}}:
From counting blocks to the {L}ebesgue measure, with an application to the
  {A}llouche-{H}u-{M}orin limit theorem on block-constrained harmonic series
(2024).
\url{https://arxiv.org/abs/2405.03625}
\end{botherref}
\endbibitem

\bibitem[\protect\citeauthoryear{Clausen}{1828}]{clausen1828}
\begin{barticle}
\bauthor{\bsnm{Clausen}, \binits{T.}}:
\batitle{Beitrag zur {Theorie} der {Reihen}.}
\bjtitle{J. Reine Angew. Math.}
\bvolume{3},
\bfpage{92}--\blpage{95}
(\byear{1828})
\doiurl{10.1515/crll.1828.3.92}
\end{barticle}
\endbibitem

\bibitem[\protect\citeauthoryear{Craven}{1965}]{craven1965}
\begin{barticle}
\bauthor{\bsnm{Craven}, \binits{B.D.}}:
\batitle{On digital distribution in some integer sequences}.
\bjtitle{J. Austral. Math. Soc.}
\bvolume{5},
\bfpage{325}--\blpage{330}
(\byear{1965})
\end{barticle}
\endbibitem

\bibitem[\protect\citeauthoryear{Farhi}{2008}]{farhi}
\begin{barticle}
\bauthor{\bsnm{Farhi}, \binits{B.}}:
\batitle{A curious result related to {K}empner's series}.
\bjtitle{Amer. Math. Monthly}
\bvolume{115}(\bissue{10}),
\bfpage{933}--\blpage{938}
(\byear{2008})
\doiurl{10.1080/00029890.2008.11920611}
\end{barticle}
\endbibitem

\bibitem[\protect\citeauthoryear{Fischer}{1993}]{fischer}
\begin{barticle}
\bauthor{\bsnm{Fischer}, \binits{H.-J.}}:
\batitle{Die {S}umme der {R}eziproken der nat\"{u}rlichen {Z}ahlen ohne
  {Z}iffer {$9$}}.
\bjtitle{Elem. Math.}
\bvolume{48}(\bissue{3}),
\bfpage{100}--\blpage{106}
(\byear{1993})
\end{barticle}
\endbibitem

\bibitem[\protect\citeauthoryear{Gordon}{2019}]{gordon2019}
\begin{barticle}
\bauthor{\bsnm{Gordon}, \binits{R.A.}}:
\batitle{Comments on ``{S}ubsums of the harmonic series''}.
\bjtitle{Amer. Math. Monthly}
\bvolume{126}(\bissue{3}),
\bfpage{275}--\blpage{279}
(\byear{2019})
\doiurl{10.1080/00029890.2019.1551022}
\end{barticle}
\endbibitem

\bibitem[\protect\citeauthoryear{Hardy and Wright}{2008}]{hardywright}
\begin{bbook}
\bauthor{\bsnm{Hardy}, \binits{G.H.}},
\bauthor{\bsnm{Wright}, \binits{E.M.}}:
\bbtitle{An Introduction to the Theory of Numbers},
\bedition{6}th edn.,
p. \bfpage{621}.
\bpublisher{Oxford University Press, Oxford}, \blocation{???}
(\byear{2008}).
\bcomment{Revised by D. R. Heath-Brown and J. H. Silverman, With a foreword by
  Andrew Wiles}
\end{bbook}
\endbibitem

\bibitem[\protect\citeauthoryear{Irwin}{1916}]{irwin}
\begin{barticle}
\bauthor{\bsnm{Irwin}, \binits{F.}}:
\batitle{A {C}urious {C}onvergent {S}eries}.
\bjtitle{Amer. Math. Monthly}
\bvolume{23}(\bissue{5}),
\bfpage{149}--\blpage{152}
(\byear{1916})
\doiurl{10.2307/2974352}
\end{barticle}
\endbibitem

\bibitem[\protect\citeauthoryear{Kempner}{1914}]{kempner}
\begin{barticle}
\bauthor{\bsnm{Kempner}, \binits{A.J.}}:
\batitle{A {C}urious {C}onvergent {S}eries}.
\bjtitle{Amer. Math. Monthly}
\bvolume{21}(\bissue{2}),
\bfpage{48}--\blpage{50}
(\byear{1914})
\doiurl{10.2307/2972074}
\end{barticle}
\endbibitem

\bibitem[\protect\citeauthoryear{K\"{o}hler and
  Spilker}{2009}]{kohlerspilker2009}
\begin{barticle}
\bauthor{\bsnm{K\"{o}hler}, \binits{G.}},
\bauthor{\bsnm{Spilker}, \binits{J.}}:
\batitle{Dirichlet-{R}eihen zu {K}empners merkw\"{u}rdiger konvergenter
  {R}eihe}.
\bjtitle{Math. Semesterber.}
\bvolume{56}(\bissue{2}),
\bfpage{187}--\blpage{199}
(\byear{2009})
\doiurl{10.1007/s00591-009-0059-5}
\end{barticle}
\endbibitem

\bibitem[\protect\citeauthoryear{Lubeck and
  Ponomarenko}{2018}]{lubeckponomarenko2018}
\begin{barticle}
\bauthor{\bsnm{Lubeck}, \binits{B.}},
\bauthor{\bsnm{Ponomarenko}, \binits{V.}}:
\batitle{Subsums of the harmonic series}.
\bjtitle{Amer. Math. Monthly}
\bvolume{125}(\bissue{4}),
\bfpage{351}--\blpage{355}
(\byear{2018})
\doiurl{10.1080/00029890.2018.1420996}
\end{barticle}
\endbibitem

\bibitem[\protect\citeauthoryear{Mukherjee and
  Sarkar}{2021}]{mukherjeesarkar2021}
\begin{barticle}
\bauthor{\bsnm{Mukherjee}, \binits{R.}},
\bauthor{\bsnm{Sarkar}, \binits{N.}}:
\batitle{A short note on a curious convergent series}.
\bjtitle{Asian-Eur. J. Math.}
\bvolume{14}(\bissue{9}),
\bfpage{2150158}--\blpage{3}
(\byear{2021})
\doiurl{10.1142/S1793557121501588}
\end{barticle}
\endbibitem

\bibitem[\protect\citeauthoryear{Nathanson}{2021a}]{nathanson2021integers}
\begin{barticle}
\bauthor{\bsnm{Nathanson}, \binits{M.B.}}:
\batitle{Curious convergent series of integers with missing digits}.
\bjtitle{Integers}
\bvolume{21A},
\bfpage{18}--\blpage{9}
(\byear{2021})
\end{barticle}
\endbibitem

\bibitem[\protect\citeauthoryear{Nathanson}{2021b}]{nathanson2021jnt}
\begin{barticle}
\bauthor{\bsnm{Nathanson}, \binits{M.B.}}:
\batitle{Dirichlet series of integers with missing digits}.
\bjtitle{J. Number Theory}
\bvolume{222},
\bfpage{30}--\blpage{37}
(\byear{2021})
\doiurl{10.1016/j.jnt.2020.10.002}
\end{barticle}
\endbibitem

\bibitem[\protect\citeauthoryear{Nathanson}{2022}]{nathanson2022ramanujan}
\begin{barticle}
\bauthor{\bsnm{Nathanson}, \binits{M.B.}}:
\batitle{Convergent series of integers with missing digits}.
\bjtitle{Ramanujan J.}
\bvolume{58}(\bissue{2}),
\bfpage{667}--\blpage{676}
(\byear{2022})
\doiurl{10.1007/s11139-021-00444-5}
\end{barticle}
\endbibitem

\bibitem[\protect\citeauthoryear{Schmelzer and
  Baillie}{2008}]{schmelzerbaillie}
\begin{barticle}
\bauthor{\bsnm{Schmelzer}, \binits{T.}},
\bauthor{\bsnm{Baillie}, \binits{R.}}:
\batitle{Summing a curious, slowly convergent series}.
\bjtitle{Amer. Math. Monthly}
\bvolume{115}(\bissue{6}),
\bfpage{525}--\blpage{540}
(\byear{2008})
\doiurl{10.1080/00029890.2008.11920559}
\end{barticle}
\endbibitem

\bibitem[\protect\citeauthoryear{Wadhwa}{1975}]{wadhwa1975}
\begin{barticle}
\bauthor{\bsnm{Wadhwa}, \binits{A.D.}}:
\batitle{An interesting subseries of the harmonic series}.
\bjtitle{Amer. Math. Monthly}
\bvolume{82}(\bissue{9}),
\bfpage{931}--\blpage{933}
(\byear{1975})
\doiurl{10.2307/2318503}
\end{barticle}
\endbibitem

\bibitem[\protect\citeauthoryear{Wadhwa}{1978}]{wadhwa1978}
\begin{barticle}
\bauthor{\bsnm{Wadhwa}, \binits{A.D.}}:
\batitle{Some convergent subseries of the harmonic series}.
\bjtitle{Amer. Math. Monthly}
\bvolume{85}(\bissue{8}),
\bfpage{661}--\blpage{663}
(\byear{1978})
\doiurl{10.2307/2320338}
\end{barticle}
\endbibitem

\end{thebibliography}

\end{document}